%
\documentclass[12pt]{amsart}       
\usepackage{txfonts}
\usepackage{amssymb}
\usepackage{eucal}
\usepackage{graphicx}
\usepackage{amsmath}
\usepackage{amscd}
\usepackage[all]{xy}           
\usepackage{amsfonts,latexsym}
\usepackage{xspace}
\usepackage{epsfig}
\usepackage{float}
\usepackage{color}
\usepackage{fancybox}
\usepackage{colordvi}
\usepackage{multicol}
\usepackage{colordvi}
\usepackage[colorlinks,final,backref=page,hyperindex,hypertex]{hyperref}
\usepackage[active]{srcltx} 

\usepackage{graphicx}

\topmargin -.8cm \textheight 22.8cm \oddsidemargin 0cm \evensidemargin -0cm \textwidth 16.3cm

\newtheorem{theorem}{Theorem}[section]

\theoremstyle{definition}

\newtheorem{lemma}[theorem]{Lemma}
\newtheorem{corollary}[theorem]{Corollary}
\newtheorem{prop-def}{Proposition-Definition}[section]
\newtheorem{coro-def}{Corollary-Definition}[section]

\newtheorem{remark}[theorem]{Remark}

\newtheorem{example}[theorem]{Example}


\newcommand{\nc}{\newcommand}
\nc{\tred}[1]{\textcolor{red}{#1}}
\nc{\tblue}[1]{\textcolor{blue}{#1}}
\nc{\tgreen}[1]{\textcolor{green}{#1}}
\nc{\tpurple}[1]{\textcolor{purple}{#1}}
\nc{\btred}[1]{\textcolor{red}{\bf #1}}
\nc{\btblue}[1]{\textcolor{blue}{\bf #1}}
\nc{\btgreen}[1]{\textcolor{green}{\bf #1}}
\nc{\btpurple}[1]{\textcolor{purple}{\bf #1}}
\nc{\NN}{{\mathbb N}}
\nc{\ncsha}{{\mbox{\cyr X}^{\mathrm NC}}} \nc{\ncshao}{{\mbox{\cyr
X}^{\mathrm NC}_0}}


\newcommand{\efootnote}[1]{}

\renewcommand{\textbf}[1]{}

\newcommand{\delete}[1]{}

\nc{\mlabel}[1]{\label{#1}}  
\nc{\mcite}[1]{\cite{#1}}  
\nc{\mref}[1]{\ref{#1}}  
\nc{\mbibitem}[1]{\bibitem{#1}} 

\delete{
\nc{\mlabel}[1]{\label{#1}{\hfill \hspace{1cm}{\bf{{\ }\hfill(#1)}}}}
\nc{\mcite}[1]{\cite{#1}{{\bf{{\ }(#1)}}}}  
\nc{\mref}[1]{\ref{#1}{{\bf{{\ }(#1)}}}}  
\nc{\mbibitem}[1]{\bibitem[\bf #1]{#1}} 
}


\nc{\opa}{\ast} \nc{\opb}{\odot} \nc{\op}{\bullet} \nc{\pa}{\frakL}
\nc{\arr}{\rightarrow} \nc{\lu}[1]{(#1)} \nc{\mult}{\mrm{mult}}
\nc{\diff}{\mathfrak{Diff}}
\nc{\opc}{\sharp}\nc{\opd}{\natural}
\nc{\ope}{\circ}
\nc{\dpt}{\mathrm{d}}
\nc{\hck}{H_{RT}}
\nc{\vdf}{\calf}
\nc{\ldf}{\calf_\ell}
\nc{\hlf}{H_\ell}
\nc{\onek}{\mathbf{1}_\bfk}
\nc{\diam}{alternating\xspace}
\nc{\Diam}{Alternating\xspace}
\nc{\cdiam}{canonical alternating\xspace}
\nc{\Cdiam}{Canonical alternating\xspace}
\nc{\AW}{\mathcal{A}}

\nc{\ari}{\mathrm{ar}}

\nc{\lef}{\mathrm{lef}}

\nc{\Sh}{\mathrm{ST}}

\nc{\Cr}{\mathrm{Cr}}

\nc{\st}{{Schr\"oder tree}\xspace}
\nc{\sts}{{Schr\"oder trees}\xspace}

\nc{\vertset}{\Omega} 

\nc{\assop}{\quad \begin{picture}(5,5)(0,0)
\line(-1,1){10}
\put(-2.2,-2.2){$\bullet$}
\line(0,-1){10}\line(1,1){10}
\end{picture} \quad \smallskip}

\nc{\operator}{\begin{picture}(5,5)(0,0)
\line(0,-1){6}
\put(-2.6,-1.8){$\bullet$}
\line(0,1){9}
\end{picture}}

\nc{\idx}{\begin{picture}(6,6)(-3,-3)
\put(0,0){\line(0,1){6}}
\put(0,0){\line(0,-1){6}}
\end{picture}}

\nc{\pb}{{\mathrm{pb}}}
\nc{\Lf}{{\mathrm{Lf}}}

\nc{\lft}{{left tree}\xspace}
\nc{\lfts}{{left trees}\xspace}

\nc{\fat}{{fundamental averaging tree}\xspace}

\nc{\fats}{{fundamental averaging trees}\xspace}
\nc{\avt}{\mathrm{Avt}}

\nc{\rass}{{\mathit{RAss}}}

\nc{\aass}{{\mathit{AAss}}}

\nc{\vin}{{\mathrm Vin}}    
\nc{\lin}{{\mathrm Lin}}    
\nc{\inv}{\mathrm{I}n}
\nc{\gensp}{V} 
\nc{\genbas}{\mathcal{V}} 
\nc{\bvp}{V_P}     
\nc{\gop}{{\,\omega\,}}     

\nc{\bin}[2]{ (_{\stackrel{\scs{#1}}{\scs{#2}}})}  
\nc{\binc}[2]{ \left (\!\! \begin{array}{c} \scs{#1}\\
    \scs{#2} \end{array}\!\! \right )}  
\nc{\bincc}[2]{  \left ( {\scs{#1} \atop
    \vspace{-1cm}\scs{#2}} \right )}  
\nc{\bs}{\bar{S}} \nc{\cosum}{\sqsubset} \nc{\la}{\longrightarrow}
\nc{\rar}{\rightarrow} \nc{\dar}{\downarrow} \nc{\dprod}{**}
\nc{\dap}[1]{\downarrow \rlap{$\scriptstyle{#1}$}}
\nc{\md}{\mathrm{dth}} \nc{\uap}[1]{\uparrow
\rlap{$\scriptstyle{#1}$}} \nc{\defeq}{\stackrel{\rm def}{=}}
\nc{\disp}[1]{\displaystyle{#1}} \nc{\dotcup}{\
\displaystyle{\bigcup^\bullet}\ } \nc{\gzeta}{\bar{\zeta}}
\nc{\hcm}{\ \hat{,}\ } \nc{\hts}{\hat{\otimes}}
\nc{\barot}{{\otimes}} \nc{\free}[1]{\bar{#1}}
\nc{\uni}[1]{\tilde{#1}} \nc{\hcirc}{\hat{\circ}} \nc{\lleft}{[}
\nc{\lright}{]} \nc{\lc}{\lfloor} \nc{\rc}{\rfloor}
\nc{\curlyl}{\left \{ \begin{array}{c} {} \\ {} \end{array}
    \right .  \!\!\!\!\!\!\!}
\nc{\curlyr}{ \!\!\!\!\!\!\!
    \left . \begin{array}{c} {} \\ {} \end{array}
    \right \} }
\nc{\longmid}{\left | \begin{array}{c} {} \\ {} \end{array}
    \right . \!\!\!\!\!\!\!}
\nc{\onetree}{\bullet} \nc{\ora}[1]{\stackrel{#1}{\rar}}
\nc{\ola}[1]{\stackrel{#1}{\la}}
\nc{\ot}{\otimes} \nc{\mot}{{{\boxtimes\,}}}
\nc{\otm}{\overline{\boxtimes}} \nc{\sprod}{\bullet}
\nc{\scs}[1]{\scriptstyle{#1}} \nc{\mrm}[1]{{\rm #1}}
\nc{\margin}[1]{\marginpar{\rm #1}}   
\nc{\dirlim}{\displaystyle{\lim_{\longrightarrow}}\,}
\nc{\invlim}{\displaystyle{\lim_{\longleftarrow}}\,}
\nc{\mvp}{\vspace{0.3cm}} \nc{\tk}{^{(k)}} \nc{\tp}{^\prime}
\nc{\ttp}{^{\prime\prime}} \nc{\svp}{\vspace{2cm}}
\nc{\vp}{\vspace{8cm}} \nc{\proofbegin}{\noindent{\bf Proof: }}
\nc{\proofend}{$\blacksquare$ \vspace{0.3cm}}
\nc{\modg}[1]{\!<\!\!{#1}\!\!>}
\nc{\intg}[1]{F_C(#1)} \nc{\lmodg}{\!
<\!\!} \nc{\rmodg}{\!\!>\!}
\nc{\cpi}{\widehat{\Pi}}
\nc{\sha}{{\mbox{\cyr X}}}  
\nc{\shap}{{\mbox{\cyrs X}}} 
\nc{\shpr}{\diamond}    
\nc{\shp}{\ast} \nc{\shplus}{\shpr^+}
\nc{\shprc}{\shpr_c}    
\nc{\msh}{\ast} \nc{\zprod}{m_0} \nc{\oprod}{m_1}
\nc{\vep}{\epsilon} \nc{\labs}{\mid\!} \nc{\rabs}{\!\mid}
\nc{\sqmon}[1]{\langle #1\rangle}

\nc{\mmbox}[1]{\mbox{\ #1\ }} \nc{\dep}{\mrm{dep}} \nc{\fp}{\mrm{FP}}
\nc{\rchar}{\mrm{char}} \nc{\End}{\mrm{End}} \nc{\Fil}{\mrm{Fil}}
\nc{\Mor}{Mor\xspace} \nc{\gmzvs}{gMZV\xspace}
\nc{\gmzv}{gMZV\xspace} \nc{\mzv}{MZV\xspace}
\nc{\mzvs}{MZVs\xspace} \nc{\Hom}{\mrm{Hom}} \nc{\id}{\mrm{id}}
\nc{\im}{\mrm{im}} \nc{\incl}{\mrm{incl}} \nc{\map}{\mrm{Map}}
\nc{\mchar}{\rm char} \nc{\nz}{\rm NZ} \nc{\supp}{\mathrm Supp}

\nc{\Alg}{\mathbf{Alg}} \nc{\Bax}{\mathbf{Bax}} \nc{\bff}{\mathbf f}
\nc{\bfk}{{\bf k}} \nc{\bfone}{{\bf 1}} \nc{\bfx}{\mathbf x}
\nc{\bfy}{\mathbf y}
\nc{\base}[1]{\bfone^{\otimes ({#1}+1)}} 
\nc{\Cat}{\mathbf{Cat}}

\nc{\detail}{\marginpar{\bf More detail}
    \noindent{\bf Need more detail!}
    \svp}
\nc{\Int}{\mathbf{Int}} \nc{\Mon}{\mathbf{Mon}}
\nc{\rbtm}{{shuffle }} \nc{\rbto}{{Rota-Baxter }}
\nc{\remarks}{\noindent{\bf Remarks: }} \nc{\Rings}{\mathbf{Rings}}
\nc{\Sets}{\mathbf{Sets}} \nc{\wtot}{\widetilde{\odot}}
\nc{\wast}{\widetilde{\ast}} \nc{\bodot}{\bar{\odot}}
\nc{\bast}{\bar{\ast}} \nc{\hodot}[1]{\odot^{#1}}
\nc{\hast}[1]{\ast^{#1}} \nc{\mal}{\mathcal{O}}
\nc{\tet}{\tilde{\ast}} \nc{\teot}{\tilde{\odot}}
\nc{\oex}{\overline{x}} \nc{\oey}{\overline{y}}
\nc{\oez}{\overline{z}} \nc{\oef}{\overline{f}}
\nc{\oea}{\overline{a}} \nc{\oeb}{\overline{b}}
\nc{\weast}[1]{\widetilde{\ast}^{#1}}
\nc{\weodot}[1]{\widetilde{\odot}^{#1}} \nc{\hstar}[1]{\star^{#1}}
\nc{\lae}{\langle} \nc{\rae}{\rangle}
\nc{\lf}{\lfloor}
\nc{\rf}{\rfloor}


\nc{\QQ}{{\mathbb Q}}
\nc{\RR}{{\mathbb R}} \nc{\ZZ}{{\mathbb Z}}


\nc{\cala}{{\mathcal A}} \nc{\calb}{{\mathcal B}}
\nc{\calc}{{\mathcal C}}
\nc{\cald}{{\mathcal D}} \nc{\cale}{{\mathcal E}}
\nc{\calf}{{\mathcal F}} \nc{\calg}{{\mathcal G}}
\nc{\calh}{{\mathcal H}} \nc{\cali}{{\mathcal I}}
\nc{\call}{{\mathcal L}} \nc{\calm}{{\mathcal M}}
\nc{\caln}{{\mathcal N}} \nc{\calo}{{\mathcal O}}
\nc{\calp}{{\mathcal P}} \nc{\calr}{{\mathcal R}}
\nc{\cals}{{\mathcal S}} \nc{\calt}{{\mathcal T}}
\nc{\calu}{{\mathcal U}} \nc{\calw}{{\mathcal W}} \nc{\calk}{{\mathcal K}}
\nc{\calx}{{\mathcal X}} \nc{\CA}{\mathcal{A}}

\nc{\fraka}{{\mathfrak a}} \nc{\frakA}{{\mathfrak A}}
\nc{\frakb}{{\mathfrak b}} \nc{\frakB}{{\mathfrak B}}
\nc{\frakD}{{\mathfrak D}} \nc{\frakF}{\mathfrak{F}}
\nc{\frakf}{{\mathfrak f}} \nc{\frakg}{{\mathfrak g}}
\nc{\frakH}{{\mathfrak H}} \nc{\frakL}{{\mathfrak L}}
\nc{\frakM}{{\mathfrak M}} \nc{\bfrakM}{\overline{\frakM}}
\nc{\frakm}{{\mathfrak m}} \nc{\frakP}{{\mathfrak P}}
\nc{\frakN}{{\mathfrak N}} \nc{\frakp}{{\mathfrak p}}
\nc{\frakS}{{\mathfrak S}} \nc{\frakT}{\mathfrak{T}}
\nc{\frakX}{{\mathfrak X}}
\nc{\BS}{\mathbb{S
}}

\font\cyr=wncyr10 \font\cyrs=wncyr7
\nc{\li}[1]{\textcolor{red}{Li:#1}}
\nc{\yi}[1]{\textcolor{blue}{Yi: #1}}
\nc{\xing}[1]{\textcolor{purple}{Xing:#1}}
\nc{\revise}[1]{\textcolor{red}{#1}}


\nc{\ID}{{\rm I}}\nc{\lbar}[1]{\overline{#1}}\nc{\bre}{{\rm bre}}
\nc{\sd}{\cals}\nc{\rb}{\rm RB}\nc{\A}{\rm A}\nc{\LL}{\rm L}\nc{\tx}{\tilde{X}}
\nc{\col}{\Delta_{RT}}\nc{\mul}{m_{RT}}\nc{\ul}{u_{RT}}\nc{\epl}{\varepsilon_{RT}}
\nc{\hl}{H_{RT}}\nc{\arro}[1]{#1}\nc{\px}{P_{\tx}}\nc{\pw}{P_{\mathfrak{w}}}\nc{\pl}{B_\omega^+}
\nc{\pp}{\pl}\nc{\ppp}[1]{B^+(#1)}\nc{\dw}{\diamond_{\mathfrak{w}}}\nc{\dl}{\diamond_{\rm \ell}}
\nc{\ncshaw}{\sha^{{\rm NC}}_{\Omega}}\nc{\ncshal}{\sha^{{\rm NC}}_{{\rm RT}}}
\nc{\ver}{\rm V}\nc{\ld}{l}\nc{\del}{\Delta_{{\rm \ell}}}\nc{\epsl}{\epsilon_{{\rm \ell}}}
\nc{\uul}{u_{{\rm \ell}}}\nc{\oneh}{\mathbf{1}}\nc{\onew}{\mathbf{1}}
\nc{\etree}{1} \nc{\conc}{m_{RT}}
\nc{\hrtb}{H_{RT}(X\sqcup\Omega)} \nc{\hrts}{H_{RT}(X, \Omega)}\nc{\rts}{\mathcal{T}(X, \Omega)}\nc{\rfs}{\mathcal{F}(X, \Omega)} \nc{\ncshall}{\sha^{{\rm NC}}_{{\rm RT}}} \nc{\ldl}{\leq_{\mathrm{dl}}} \nc{\pla}{B_{\alpha}^{+}} \nc{\plb}{B_{\beta}^{+}}

\nc{\bim}[1]{#1}  \nc{\shaop}{\sha_{\Omega}^{+}}  \nc{\shao}{\sha_{\Omega}}
\nc{\bbim}[2]{#1 #2} \nc{\bbbim}[2]{#1,\, #2} \nc{\RBF}{{\rm RBF}}
\nc{\frbf}{F_{\RBF}} \nc{\shaf}{\ssha_{\tiny{\Omega}}} \nc{\sham}{\diamond_{\tiny{\Omega}}}

\nc{\dnx}{\Delta_n A} \nc{\dx}{\Delta A} \nc{\dgp}{{\rm deg_{P}}}
\nc{\dgt}{{\rm deg_{T}}} \nc{\dg}{{\rm deg}} \nc{\ida}{ID($A$)} \nc{\tu}{\tilde{u}} \nc{\tv}{\tilde{v}}
 \nc{\fbase}{\calb} \nc{\LF}{\mathrm{RF}} \nc{\FFA}{\mathrm{LF}} \nc{\irr}{\mathrm{Irr}}
 \nc{\result}{\bfk\mathrm{Irr}(S_n)}  \nc{\I}{I_{\mathrm{ID},n}^0}
 \nc{\nrs}{\calr_n^\star} \nc{\ii}{\mathrm{I}} \nc{\iii}{\mathrm{II}}
\nc{\intl}{{\rm int}}\nc{\ws}[1]{{#1}}\nc{\deleted}[1]{\delete{#1}}\nc{\plas}{placements\xspace}

\nc{\Id}{\mathrm{Id}} \nc{\Irr}{\mathrm{Irr}}


\begin{document}

\title[Primitive orthogonal idempotents]{Primitive orthogonal idempotents of Brandt semigroup algebras}
%
\author{Yi Zhang} \address{School of Mathematics and Statistics,
Lanzhou University, Lanzhou, Gansu 730000, P.\,R. China}
\email{zhangy2016@lzu.edu.cn}

\author{Jian-Rong Li} \address{School of Mathematics and Statistics, Lanzhou University, Lanzhou 730000, P. R. China.}
\email{lijr@lzu.edu.cn}

\author{Xiao-song Peng} \address{School of Mathematics and Statistics, Lanzhou University, Lanzhou 730000, P. R. China.}
\email{pengxiaosong3@163.com}

\author{Yan-Feng Luo} \address{School of Mathematics and Statistics,
Key Laboratory of Applied Mathematics and Complex Systems,
Lanzhou University, Lanzhou, 730000, P.\,R. China}
         \email{luoyf@lzu.edu.cn}

\date{\today}
\begin{abstract}
A complete set of primitive orthogonal idempotents plays an important role in the representation theory of an associative algebra. In this paper, we construct a complete set of primitive orthogonal idempotents for any finite Brandt semigroup algebra.
\end{abstract}

\subjclass[2010]{
20M25,
16W99, 
}

\keywords{Idempotents; Brandt semigroups; Semigroup algebras}

\maketitle

\tableofcontents

\setcounter{section}{0}

\allowdisplaybreaks

\section{Introduction}

In the past few decades, semigroup algebras are studied intensively, see for example, \cite{Put88}, \cite{Okn91}, \cite{Bro00}, \cite{Ren01}, \cite{JO06}, \cite{Ste06}, \cite{Sal07}, \cite{Sch08}, \cite{Ste08}, \cite{GX09}, \cite{BBBS11}, \cite{DHST11}, \cite{MS12}, \cite{EL15}.

Semigroup algebras are associative algebras. Looking for a complete set of primitive orthogonal idempotents is an important problem in the representation theory of associative algebras.  If $\{e_{i}\}_{i \in I}$ is a complete set of primitive orthogonal idempotents of a finite dimensional algebra $A$, then
\begin{align*}
A\cong\bigoplus_{i}{A{e_{i}}},
\end{align*}
where each $Ae_{i}$ is an indecomposable projective module. They are also used to explicitly compute the quiver, the Cartan matrix, and the Wedderburn decomposition of the algebra, see \cite{Bre10}.

Berg, Bergeron, Bhargava and Saliola \cite{BBBS11} found a complete set of primitive orthogonal idempotents of any $R$-trivial monoid algebra. Denton, Hivert, Schiling and Thiery \cite{DHST11} gave a construction of a set of primitive orthogonal idempotents for any $J$-trivial monoid algebra.

Steinberg \cite{Ste06}, \cite{Ste08} studied the arbitrary finite inverse semigroups $S$ and computed the primitive central idempotents associated to an irreducible representation of $S$.
Let $S$ be a finite inverse semigroup, where $E(S)$ is the semilattice of idempotents of $S$ and the maximal subgroup at $e$ is denoted by $H_{e}$ for all $e\in E(S)$. Suppose that $K$ is a field such that char$(K)\nmid |H_{e}|$.  Let $\mu$ be the M$\ddot{\mathrm{o}}$bius function of $S$ and $\chi$ an irreducible character of $S$ coming from a $\mathcal{D}$-class $D$. Then the primitive central idempotent corresponding to $\chi$ is given by
\begin{align*}
e_{\chi}=\sum_{e\in E(D)}\bigg(\frac{\chi(e)}{|H_{e}|}\sum_{s\in H_{e}}{\chi(s)}\sum_{t\leq s}{t^{-1}}\mu(t,s)\bigg),
\end{align*}
see Theorem 5.1 in \cite{Ste06}.

In this paper, we construct a complete set of primitive orthogonal idempotents for any finite Brandt semigroup algebra. Brandt semigroups are not $R$-trivial and in particular, they are not $J$-trivial. Let $G$ denote any finite group with identity $e$ and $S=B(G,n)$ a Brandt semigroup. Suppose that $G$ has $r$ conjugate classes. Then a complete set of primitive orthogonal idempotents of $\mathbb{C}S$ is (Theorem \ref{complete set for any finite group})
\begin{align*}
\bigg\{ e_{ij}=\frac{\chi_{i}(e)}{|G|}\sum_{g\in G}\chi_{i}(g^{-1})(j,g,j) \mid 1\leq i\leq r, 1\leq j\leq n\bigg\}.
\end{align*}
In general, the primitive orthogonal idempotents are not primitive central idempotents, see Remark \ref{primitive orthogonal idempotents are not central idempotents}.

The paper is structured as follows. In Section $\ref{preliminaries}$, we recall some background information about semigroup algebras, finite group characters, and group algebra codes. In Section $\ref{Primitive Orthogonal Idempotents for $B(G,n)$}$, we construct a complete set of primitive orthogonal idempotents for any finite Brandt semigroup algebra and prove our main theorem (Theorem \ref{complete set for any finite group}) given in Section  $\ref{constructing primitive orthogonal idempotents}$.  In Section $\ref{Example}$, we give some examples of complete sets of primitive orthogonal idempotents of Brandt semigroup algebras.

\section{Preliminaries}\label{preliminaries}
 In this section, we recall the definitions of semigroup algebras, Brandt semigroups, a complete set of primitive orthogonal idempotents, finite group characters, Schur orthogonality relations, which will be used in this paper.
\subsection{Semigroup algebras}  \label{definition of Group algebra}
Let $K$ be an algebraically closed field, $(S, \cdot)$ be a finite semigroup with identity element $e$ and $A$ be a $K$-algebra. The semigroup algebra of $S$ with coefficients in $A$ is the $K$-vector space $AS$ consisting of all the formal sums $\sum_{g \in S}{\lambda_{g}g}$, where $\lambda_g \in A$, with the multiplication defined by the formula
 \begin{equation*}
\left(\sum_{g \in S}{\lambda_{g}g}\right) \cdot \left(\sum_{h \in S}{\mu_{h}h}\right)=\sum_{f=gh \in S}{\lambda_{g}\mu_{h}f}.
\end{equation*}
Then $AS$ is a $K$-algebra and the element $e=1e$ is the identity of $AS$, where $1$ is the identity element of $A$, see \cite{Okn91}. If $A=K$, then the elements $g \in S$ form a basis of $KS$ over $K$.
In this paper we study the Brandt semigroup algebra $\mathbb{C}S$, where $\mathbb{C}$ is the field of complex numbers.
\subsection{Brandt semigroups}
Let $G$ be a group with identity element $e$, and  $\emph{I},\ \Lambda$ be nonempty sets. Let $P=(p_{\lambda i})$ be a $\Lambda\times\emph{I}$ matrix with entries in the 0-group $G^0=G\cup \{0\}$, and suppose that $P$ is regular, in the sense that no row or column of $P$ consists entirely of zeros. Formally,
\begin{equation*}
(\forall i\in \emph{I})\ (\exists \lambda\in \Lambda)\  p_{\lambda i}\neq 0,\\
(\forall \lambda\in \Lambda)\ (\exists i\in \emph{I})\  p_{\lambda i}\neq 0.
\end{equation*}

Let $S=(I\times G\times \Lambda)\cup \{0\}$, and define a multiplication on $S$ by
\begin{equation*}
(i,a,\lambda)(j,b,\mu)=
\begin{cases}
(i,ap_{\lambda j}b,\mu), &\text{if }  p_{\lambda i}\neq 0,\\
0, &\text{if }  p_{\lambda j}=0,
\end{cases}
\end{equation*}
and
\begin{align*}
(i,a,\lambda)0=0(i,a,\lambda)=00=0.
\end{align*}
Then  $S$ is called a completely 0-simple semigroup, see \cite{How95}.

If $P=E$, is an identity matrix, whose diagonal elements are identity of the group $G$ and $\emph{I}=\Lambda$. Then the semigroup $S$ is called a Brandt semigroup, denoted by $B(G,n)$, where $n=|I|$.
\subsection{A complete set of primitive orthogonal idempotents}\label{definition of completes set}
Let $A$ be a $K$-algebra with an identity (denoted by $\mathbf{1}$), see I.4 in \cite{Ass06}. A set of nonzero elements $\{e_i\}_{i \in I}$ of $A$ is called a complete set of primitive orthogonal idempotents for $A$, if it satifies the following four properties:
\begin{enumerate}
\item[(i)] every element $e_i$ is idempotent, namely, ${e_i}^2=e_i$ for all $i \in I$;

\item[(ii)] each of the two elements are orthogonal: $e_ie_j=e_je_i=0$ for all $i, j \in I$ with $i\neq j$;

\item[(iii)] every element $e_i$ is primitive: $e_i$ cannot be written as a sum, that is, if $e_i=x+y$, then $x=0$ or  $y=0$, where $x$ and $y$ are orthogonal idempotents in $A$;

\item[(iv)] the set $\{e_i\}_{i \in I}$ is complete: $\sum_{i \in I}{e_i=\mathbf{1}}$.
\end{enumerate}

\begin{remark}[Remark 3.2, \cite{BBBS11}]\label{maximal}
If $\{e_i\}_{i \in I}$ is a maximal set of nonzero elements satisfying conditions $\mathrm{(i)}$ and $\mathrm{(ii)}$, then $\{e_i\}_{i \in I}$ is a complete set of primitive orthogonal idempotents (that is, $\mathrm{(iii)}$ and $\mathrm{(iv)}$ also hold).
\end{remark}

\subsection{Finite group characters}
Let $V$ be a finite dimensional vector space over a field $K$ and $\rho : G\rightarrow GL(V)$ a representation of a group $G$ on $V$. The character of $\rho$ is the function $\chi_{\rho}: G\rightarrow K$ given by
\begin{align*}
\chi_{\rho}(g)=\text{Tr}(\rho(g)),
\end{align*}
where $\text{Tr}$ is the trace.

A character $\chi_{\rho}$ is called irreducible if $\rho$ is an irreducible representation. The number of conjugacy classes of $G$ is equal to the number of irreducible characters of $G$ and equals the number of isomorphism classes of irreducible $KG$-modules. The degree of the character $\chi$ is the dimension of $\rho$ and this is equal to the value $\chi(e)$, see \cite{Isa76}.

\subsection{Schur orthogonality relations}
Schur orthogonality relations, see \cite{Isa76}, express a central fact about representations of finite groups. The space of complex valued class functions of a finite group $G$ has a natural inner product
\begin{align*}
\langle\alpha,\beta\rangle=\frac{1}{|G|}\sum_{g\in G}\alpha(g)\overline{\beta(g)},
\end{align*}
where $\overline{\beta(g)}$ is the complex conjugate of the value of $\beta$ on $g$.

With respect to this inner product, the irreducible characters form an orthogonal basis for the space of class functions, and this yields the orthogonality relation for the rows of the character table
\begin{align*}
\langle\chi_{i},\chi_{j}\rangle=
\begin{cases}
0,  &\text{if }i\neq j, \\
1,  &\text{if }i=j.
\end{cases}
\end{align*}
For $g, h \in G$, the orthogonality relation for columns is given by
\begin{align*}\label{orthogonality relation for columas}
\sum_{\chi_{i}}\chi_{i}(g)\overline{\chi_i(h)}=
\begin{cases}
|C_{G}(g)|, &\text{if }g, h \text{ are conjugate}, \\
0,          &\text{otherwise},
\end{cases}
\end{align*}
where $|C_{G}(g)|$ denotes the cardinality of the centralizer of $g$.

\section{A complete set of primitive orthogonal idempotents for $B(G,n)$}\label{Primitive Orthogonal Idempotents for $B(G,n)$}
In this section, we construct a complete set of primitive orthogonal idempotents of any finite Brandt semigroup algebra.

\subsection{Constructing primitive orthogonal idempotents}\label{constructing primitive orthogonal idempotents}
Suppose that $G$ is any finite group with identity $e$ and has $r$ conjugacy classes. Let $S=B(G,n)$ be a finite Brandt semigroup and $\chi_{i}$ an irreducible complex character of $G$. For $1\leq {i} \leq r$, ${1\leq j\leq n}$, we define
\begin{align}
e_{ij}=\frac{\chi_{i}(e)}{|G|}\sum_{g\in G}\chi_{i}(g^{-1})(j,g,j).
\end{align}
We note that the element $\sum_{i=1}^{n}{(i,e,i)}$ is the identity of Brandt semigroup algebra $\mathbb{C}S$, denoted by $\mathbf{1}$.

The following lemmas are needed.
\begin{lemma}[Corollary 2.7, \cite{Isa76}]\label{irreducible complex character}
Let $G$ be a finite group with the identity $e$ and $\mathrm{Irr}(G)$ the set of all irreducible complex character of $G$. Then $|\mathrm{Irr}(G)|$ equals the number of conjugacy classes of $G$ and
\begin{align*}
\sum_{\chi \in \mathrm{Irr}(G)}\chi(e)^2=|G|.
\end{align*}
\end{lemma}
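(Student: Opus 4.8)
The plan is to obtain both assertions as short consequences of the Schur orthogonality relations already recalled in Section~\ref{preliminaries}, together with the semisimplicity of $\mathbb{C}G$; no genuinely new ingredient is required beyond what the excerpt quotes.

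For the degree identity $\sum_{\chi\in\mathrm{Irr}(G)}\chi(e)^2=|G|$, I would specialize the column orthogonality relation \eqref{orthogonality relation for columas} to $g=h=e$. Since $e$ lies in the center of $G$ we have $C_G(e)=G$, so the right-hand side is $|G|$; and each $\chi_i(e)$ is the degree of $\chi_i$, hence a positive integer, so $\overline{\chi_i(e)}=\chi_i(e)$. The relation therefore reads $\sum_{\chi_i}\chi_i(e)^2=|G|$, which is exactly the claim. Equivalently, one may compare $\mathbb{C}$-dimensions in the Wedderburn decomposition $\mathbb{C}G\cong\bigoplus_i M_{\chi_i(e)}(\mathbb{C})$, which is available because $\mathrm{char}\,\mathbb{C}=0$ does not divide $|G|$, so $\mathbb{C}G$ is semisimple by Maschke's theorem.

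For the equality of $|\mathrm{Irr}(G)|$ with the number of conjugacy classes, I would look at the character table $T=\big(\chi_i(g_j)\big)$, with rows indexed by $\chi_1,\dots,\chi_p\in\mathrm{Irr}(G)$ and columns by representatives $g_1,\dots,g_q$ of the conjugacy classes of $G$. Row orthogonality $\langle\chi_i,\chi_{i'}\rangle=\delta_{ii'}$ forces the $p$ rows of $T$ to be linearly independent over $\mathbb{C}$ (after the standard weighting by class sizes, i.e. $T D T^{*}=|G|I$ with $D$ the diagonal matrix of class sizes), so $p\le q$; column orthogonality \eqref{orthogonality relation for columas} gives $T^{*}T=\mathrm{diag}(|C_G(g_j)|)$, which is invertible, so the $q$ columns are linearly independent and $q\le p$. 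Hence $p=q$. There is also a module-theoretic variant: the center $Z(\mathbb{C}G)$ has the class sums as a basis, so $\dim Z(\mathbb{C}G)=q$, while from $\mathbb{C}G\cong\bigoplus_i M_{\chi_i(e)}(\mathbb{C})$ one reads off $\dim Z(\mathbb{C}G)=p$.

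Since the statement is quoted verbatim as Corollary~2.7 of \cite{Isa76}, the proof in the paper is really just a citation, and the point of the sketch above is that, given the orthogonality relations displayed in the preliminaries, both halves are one-line deductions. The only place where something substantial is used is the input itself---the orthogonality relations, equivalently the Wedderburn structure of $\mathbb{C}G$---and proving that from scratch (Maschke's theorem plus Artin--Wedderburn, or a direct argument with intertwining operators and Schur's lemma) would be the real work if one did not wish to cite \cite{Isa76}. So I would not reprove it here; I would cite \cite{Isa76} and record the two short specializations above as the reason the lemma holds.
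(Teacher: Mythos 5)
Your proposal is correct, and it agrees with the paper in the only way that matters here: the paper offers no proof of this lemma at all --- it is quoted verbatim as Corollary 2.7 of \cite{Isa76} and used as a black box. Your two specializations are standard and valid: column orthogonality \eqref{orthogonality relation for columas} at $g=h=e$ with $C_G(e)=G$ gives the degree sum, and either the row/column independence of the character table or the count $\dim Z(\mathbb{C}G)=\#\{\text{conjugacy classes}\}=|\mathrm{Irr}(G)|$ gives the first assertion. The one caveat worth recording is that the column orthogonality relation, as usually derived, already presupposes that the character table is square, so your second argument is a consistency check within the quoted framework rather than an independent proof; since the lemma is cited rather than proved in the paper, this is harmless, and your decision to cite \cite{Isa76} while recording the short deductions is exactly the right call.
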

By Schur orthogonality relations, we have the following lemma.
\begin{lemma} [Theorem 2.13, \cite{Isa76}]\label{orthogonal relation}
Let $G$ be a finite group with the identity $e$ and $\chi_{i}$ an irreducible complex character of $G$. Then the following holds for every $h\in G$,
\begin{align*}
\frac{1}{|G|}\sum_{g\in G}\chi_{i}(gh)\chi_{j}(g^{-1})=\delta_{ij}\frac{\chi_{i}(h)}{\chi_{i}(e)},
\end{align*}
where
\begin{align*}
\delta_{ij}=
\begin{cases}
1 &\text{if } i=j,\\
0 &\text{if } i\neq j.
\end{cases}
\end{align*}
\end{lemma}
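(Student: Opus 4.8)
\emph{Proof proposal.} The statement concerns the pair of irreducible representations affording $\chi_i$ and $\chi_j$, and the plan is to reduce it to Schur's lemma via the standard averaging trick and then unwind the characters into matrix coefficients. Fix once and for all a representation $\rho_i\colon G\to GL(V_i)$ affording $\chi_i$ for each $1\le i\le r$; since an irreducible character determines its irreducible representation up to equivalence, $\rho_i$ and $\rho_j$ are equivalent if and only if $i=j$. The engine is the following: for any $\mathbb{C}$-linear map $T\colon V_j\to V_i$ the averaged map $\widehat{T}=\frac{1}{|G|}\sum_{g\in G}\rho_i(g)\,T\,\rho_j(g)^{-1}$ intertwines $\rho_j$ with $\rho_i$ (a one-line change of variables $g\mapsto xg$ gives $\rho_i(x)\widehat{T}=\widehat{T}\rho_j(x)$ for all $x\in G$), so by Schur's lemma $\widehat{T}=0$ when $i\neq j$, while when $i=j$ one has $\widehat{T}=\lambda\,\mathrm{id}_{V_i}$ with $\lambda=\mathrm{Tr}(T)/\chi_i(e)$, the scalar being pinned down by taking traces of both sides (this is where algebraic closure of $\mathbb{C}$ is used).

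\smallskip

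Concretely, I would fix a basis of each $V_i$ and let $T$ range over matrix units; the dichotomy above then becomes the matrix-coefficient orthogonality relations
\[
\frac{1}{|G|}\sum_{g\in G}\bigl(\rho_i(g)\bigr)_{ap}\,\bigl(\rho_j(g^{-1})\bigr)_{qb}=\delta_{ij}\,\frac{\delta_{pq}\,\delta_{ab}}{\chi_i(e)}.
\]
Then I would expand $\chi_i(gh)=\mathrm{Tr}\bigl(\rho_i(g)\rho_i(h)\bigr)=\sum_{a,b}\bigl(\rho_i(g)\bigr)_{ab}\bigl(\rho_i(h)\bigr)_{ba}$ and $\chi_j(g^{-1})=\sum_{c}\bigl(\rho_j(g^{-1})\bigr)_{cc}$, multiply, sum over $g\in G$, divide by $|G|$, and feed the inner sum into the displayed relation. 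The outcome is $\dfrac{\delta_{ij}}{\chi_i(e)}\sum_{a,b,c}\bigl(\rho_i(h)\bigr)_{ba}\,\delta_{bc}\,\delta_{ac}$; the two Kronecker deltas force $a=b=c$, the sum collapses to $\sum_{c}\bigl(\rho_i(h)\bigr)_{cc}=\chi_i(h)$, and we are left with $\delta_{ij}\,\chi_i(h)/\chi_i(e)$, which is the claim.

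\smallskip

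A coordinate-free variant avoids choosing bases. On $W=\mathrm{Hom}_{\mathbb{C}}(V_j,V_i)$ with $G$-action $g\cdot T=\rho_i(g)T\rho_j(g)^{-1}$, the operator $\Psi\colon T\mapsto \frac{1}{|G|}\sum_{g\in G}\rho_i(gh)\,T\,\rho_j(g^{-1})$ is the composite $\pi\circ L$, where $\pi\colon W\to W^{G}$ is the averaging projection onto the $G$-invariants and $L$ sends $T$ to $\rho_i(h)\circ T$. Its trace equals $\frac{1}{|G|}\sum_{g}\chi_i(gh)\chi_j(g^{-1})$ because $\mathrm{Tr}_W(T\mapsto ATB)=\mathrm{Tr}(A)\,\mathrm{Tr}(B)$; on the other hand Schur's lemma identifies $W^{G}=\mathrm{Hom}_G(V_j,V_i)$, namely $0$ when $i\neq j$ and the line $\mathbb{C}\,\mathrm{id}_{V_i}$ when $i=j$, and in the latter case $\mathrm{Tr}(\Psi)$ is read off from $\Psi(\mathrm{id}_{V_i})=\pi(\rho_i(h))=\frac{1}{|G|}\sum_{g}\rho_i(ghg^{-1})=\bigl(\chi_i(h)/\chi_i(e)\bigr)\mathrm{id}_{V_i}$, giving the right-hand side directly.

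\smallskip

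The mathematical content is entirely Schur's lemma together with the algebraic closedness of $\mathbb{C}$; everything else is bookkeeping. The one place where care is genuinely required is the index pattern in the matrix-coefficient relation: the inverse on the second factor makes the Kronecker deltas pair the adjacent inner indices and the outer indices, and getting this exactly right is what makes the final collapse produce $\chi_i(h)$ rather than $\chi_i(e)$ or some other trace. I would also record explicitly that fixing one representative $\rho_i$ per isomorphism class makes the matrix coefficients well defined, while the final identity is independent of all such choices since it is stated purely in terms of characters.
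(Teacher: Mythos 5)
Your proposal is correct, but there is nothing in the paper to compare it against: the paper does not prove this lemma at all, it simply quotes it as Theorem 2.13 of Isaacs' \emph{Character theory of finite groups} (the generalized orthogonality relation). What you have written is the standard textbook derivation, and both of your variants check out --- the matrix-coefficient computation has the index pattern right (the deltas $\delta_{bc}\delta_{ac}$ collapse the triple sum to $\sum_c(\rho_i(h))_{cc}=\chi_i(h)$), and the coordinate-free version correctly computes $\mathrm{Tr}(\Psi)$ as the eigenvalue of $\Psi$ on $\mathrm{id}_{V_i}$ since the image of $\Psi=\pi\circ L$ lies in the line $\mathrm{Hom}_G(V_i,V_i)$. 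The only caveat worth recording is that you are supplying a proof where the authors intended a citation, so in the context of this paper your argument is supererogatory rather than divergent.
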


\subsection{Main results}
The following theorem is our main result.
\begin{theorem}\label{complete set for any finite group}
Let $G$ denote any finite group with identity $e$ and $S=B(G,n)$ a Brandt semigroup. Suppose that $G$ has $r$ conjugacy classes. Then the elements
\begin{align*}
e_{ij}=\frac{\chi_{i}(e)}{|G|}\sum_{g\in G}\chi_{i}(g^{-1})(j,g,j), \label{complete set for any CS}
\end{align*}
where ${1\leq i\leq r}$, ${1\leq j\leq n}$, form a complete set of primitive orthogonal idempotents of the Brandt semigroup algebra $\mathbb{C}S$.
\end{theorem}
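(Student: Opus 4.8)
The plan is to verify the four defining properties of a complete set of primitive orthogonal idempotents. By Remark \ref{maximal} it actually suffices to establish idempotency (i) and orthogonality (ii) together with the fact that $\{e_{ij}\}$ is maximal among families with these two properties; and maximality will be immediate once completeness $\sum_{i,j}e_{ij}=\mathbf 1$ is in hand, because any nonzero idempotent orthogonal to every $e_{ij}$ would satisfy $f=f\mathbf 1=\sum_{i,j}fe_{ij}=0$. So concretely I would prove (i), (ii) and (iv) by direct computation inside $\mathbb C S$, using only the Brandt multiplication rule in the form $(j,g,j)(l,h,l)=\delta_{jl}(j,gh,j)$; in particular each $e_{ij}$ is supported on the copy of $G$ realized in $\mathbb C S$ as the span of $\{(j,g,j):g\in G\}$, which makes all the products tractable.

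For (i) and (ii) I would expand
\[
e_{ij}e_{kl}=\frac{\chi_i(e)\chi_k(e)}{|G|^2}\sum_{g,h\in G}\chi_i(g^{-1})\chi_k(h^{-1})\,(j,g,j)(l,h,l).
\]
Since $(j,g,j)(l,h,l)=0$ for $j\neq l$, this vanishes when $j\neq l$. For $j=l$, substitute $m=gh$ and read off the coefficient of $(j,m,j)$, namely $\sum_g\chi_i(g^{-1})\chi_k(m^{-1}g)$; using $\chi_k(m^{-1}g)=\chi_k(gm^{-1})$ and Lemma \ref{orthogonal relation} with $h=m^{-1}$, this equals $\delta_{ik}\,|G|\,\chi_i(m^{-1})/\chi_i(e)$. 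Substituting back gives $e_{ij}e_{kl}=\delta_{ik}\delta_{jl}\,e_{ij}$, which is exactly (i) (when $i=k$, $j=l$) together with (ii).

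For completeness (iv) I would compute
\[
\sum_{i=1}^{r}\sum_{j=1}^{n}e_{ij}=\sum_{j=1}^{n}\frac{1}{|G|}\sum_{g\in G}\Big(\sum_{i=1}^{r}\chi_i(e)\chi_i(g^{-1})\Big)(j,g,j),
\]
and since $\chi_i(g^{-1})=\overline{\chi_i(g)}$, the inner sum is $\sum_i\chi_i(e)\overline{\chi_i(g)}$, which by the column orthogonality relation \eqref{orthogonality relation for columas} equals $|C_G(e)|=|G|$ when $g$ is conjugate to $e$ (that is, $g=e$) and $0$ otherwise. Hence the expression collapses to $\sum_{j=1}^{n}(j,e,j)=\mathbf 1$. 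At this point $\{e_{ij}\}$ is a maximal family of nonzero idempotents satisfying (i) and (ii), so Remark \ref{maximal} delivers primitivity (iii) and re-confirms (iv), completing the proof.

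The two Schur-orthogonality manipulations are routine; the step I expect to be the main obstacle is primitivity (iii). Invoking Remark \ref{maximal} is the clean route, but if one wants a direct argument one is forced to analyze the corner algebra $e_{ij}\,\mathbb C S\,e_{ij}$: the computation above shows it is spanned by the elements $e_{ij}(j,b,j)e_{ij}$, so it is identified, via $(j,x,j)\leftrightarrow x$, with the corner of $\mathbb C G$ cut out by the central idempotent attached to $\chi_i$. Understanding the structure of that corner — rather than the idempotency and orthogonality bookkeeping — is where the real content of the statement lies, and it is the place where the argument must be carried out with care.
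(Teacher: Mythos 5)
Your proposal is correct and follows essentially the same route as the paper: completeness via column orthogonality, orthogonality via Lemma \ref{orthogonal relation}, and primitivity via the maximality argument of Remark \ref{maximal}. The only cosmetic difference is that you obtain idempotency directly from the unified computation $e_{ij}e_{kl}=\delta_{ik}\delta_{jl}e_{ij}$, whereas the paper derives it afterwards from $e_{st}^2=e_{st}\mathbf{1}=e_{st}$; your closing worry about needing to analyze the corner algebra is unnecessary, since the maximality argument you already gave settles primitivity.
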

\begin{proof}
Firstly, we prove the set $\{e_{ij}\}_{1\leq i \leq r,\ 1\leq j \leq n}$ is complete.
Since $G$ has $r$ conjugacy classes, $G$ has $r$ irreducible representations. For any $1\leq j \leq n$, we have
\begin{align*}\nonumber
\sum_{i=1}^{r}e_{ij}&=\frac{1}{|G|}\sum_{i=1}^{r}\sum_{g\in G}\chi_{i}(e)\chi_{i}(g^{-1})(j,g,j) \\ \nonumber
&=\frac{1}{|G|}\sum_{i=1}^{r}\chi_{i}(e)\big(\chi_{i}(e^{-1})(j,e,j)+\sum_{g\in G\backslash\{e\}}\chi_{i}(g^{-1})(j,g,j)\big) \\ \nonumber
&=\frac{1}{|G|}\sum_{i=1}^{r}\chi_{i}(e)\chi_{i}(e^{-1})(j,e,j)+\frac{1}{|G|}\sum_{g\in G\backslash\{e\}}\sum_{i=1}^{r}\chi_{i}(e)\chi_{i}(g^{-1})(j,g,j).\nonumber
\end{align*}
Obviously, $e$ and $g\in G\backslash\{e\}$ are not conjugate. According to Schur orthogonality relations (\ref{orthogonality relation for columas}), we have
\begin{align*}
\sum_{i=1}^{r}\chi_{i}(e)\chi_{i}(g)=0,\nonumber
\end{align*}
where $g\neq e$.

Hence
\begin{align*}
\sum_{i=1}^{r}\chi_{i}(e)\chi_{i}(g^{-1})=0.\nonumber
\end{align*}
Thus
\begin{align*}\nonumber
\sum_{i=1}^{r}e_{ij}=\frac{1}{|G|}\sum_{i=1}^{r}\chi_{i}(e)\chi_{i}(e^{-1})(j,e,j)
=\frac{1}{|G|}\sum_{i=1}^{r}(\chi_{i}(e))^{2}(j,e,j).
\end{align*}
By Lemma \ref{irreducible complex character}, we have
\begin{align*}
\sum_{i=1}^{r}(\chi_{i}(e))^{2}=|G|.\nonumber
\end{align*}
Then
\begin{align*}
\sum_{i=1}^{r}e_{ij}=(j,e,j).\nonumber
\end{align*}
Thus
\begin{align*}
\sum_{j=1}^{n}\sum_{i=1}^{r}e_{ij}=\sum_{j=1}^{n}(j,e,j)=\mathbf{1}.\nonumber
\end{align*}

Secondly, we prove that the elements of $\{e_{ij}\}_{1\leq i \leq r,\ 1\leq j \leq n}$ are pairwise orthogonal.
For any $j_{1}\neq j_{2}$, $1\leq j_{1}, j_{2} \leq n$, $1\leq i_{1}, i_{2} \leq r$,
\begin{align*}
e_{i_{1}j_{1}}e_{i_{2}j_{2}}&=\left(\frac{\chi_{i_{1}}(e)}{|G|}\sum_{g' \in G}\chi_{i_{1}}(g'^{-1})(j_{1},g',j_{1})\right)\left(\frac{\chi_{i_{2}}(e)}{|G|}\sum_{g \in G}\chi_{i_{2}}({g}^{-1})(j_{2},g,j_{2})\right)\\
&=0.
\end{align*}
For any $j_{1}=j_{2}$, $i_{1}\neq i_{2}$,
\begin{align*}
e_{i_{1}j_{1}}e_{i_{2}j_{2}}&=\left(\frac{\chi_{i_{1}}(e)}{|G|}\sum_{g' \in G}\chi_{i_{1}}(g'^{-1})(j_{1},g',j_{1})\right)\left(\frac{\chi_{i_{2}}(e)}{|G|}\sum_{g \in G}\chi_{i_{2}}({g}^{-1})(j_{2},g,j_{2})\right)\\
&=\frac{\chi_{i_{1}}(e)\chi_{i_{2}}(e)}{|G|^2}\sum_{{g'}\in G}{\sum_{g\in G}}{\chi_{i_{1}}({g'}^{-1})}{\chi_{i_{2}}({g}^{-1})}(j_{1},g'g,j_{1})\\
&=\frac{\chi_{i_{1}}(e)\chi_{i_{2}}(e)}{|G|^2}\sum_{g''\in G}\sum_{g\in G}\chi_{i_{1}}({g{g''}^{-1}})\chi_{i_{2}}({g}^{-1})(j_{1},g'',j_{1}).
\end{align*}
By Lemma \ref{orthogonal relation},
\begin{align*}
\sum_{g\in G}\chi_{i_{1}}({g{g''}^{-1}})\chi_{i_{2}}({g}^{-1})=\delta_{i_{1}i_{2}}\frac{\chi_{i_{1}}({g''}^{-1})}{\chi_{i_{1}}(e)}.
\end{align*}
Then, for any $i_{1}\neq i_{2}$,
\begin{align*}
e_{i_{1}j_{1}}e_{i_{2}j_{1}}=0.
\end{align*}
Hence, the elements of $\{e_{ij}\}_{1\leq i \leq r,\ 1\leq j \leq n}$ are pairwise orthogonal.

In the following,  we prove that each element of $\{e_{ij}\}_{1\leq i \leq r,\ 1\leq j \leq n}$ is idempotent.
Let $e_{st}$ be any element of the set $\{e_{ij}\}_{1\leq i \leq r,\ 1\leq j \leq n}$, where $1\leq s \leq r$, $1\leq t\leq n$. Then
\begin{align*}
{e_{st}}^{2}=e_{st}\left(\sum_{j=1}^{n}\sum_{i=1}^{r}e_{ij}\right)=e_{st}\mathbf{1}=e_{st}.
\end{align*}

Finally, we prove that each element of $\{e_{ij}\}_{1\leq i \leq r,\ 1\leq j \leq n}$ is primitive. Suppose that $f\in \mathbb{C}S$, $f^2=f$, $f\neq0$, $f\notin \{e_{ij}\}_{1\leq i \leq r,\ 1\leq j \leq n} $, and $f$ is orthogonal to every $e_{ij}$, $1\leq i \leq r$, $1\leq j \leq n$. Then
\begin{align*}
f=f\mathbf{1}=f\left(\sum_{j=1}^{n}\sum_{i=1}^{r}e_{ij}\right)=0.
\end{align*}
This contradicts the assumption that $f\neq 0$. Therefore $\{e_{ij}\}_{1\leq i \leq r,\ 1\leq j\leq n}$ is a maximal set of nonzero elements satisfying
conditions (i) and (ii) in Section \ref{definition of completes set}. By Remark \ref{maximal}, each element of $\{e_{ij}\}_{1\leq i \leq r,\ 1\leq j \leq n}$ is primitive.
\end{proof}

\begin{corollary}\label{complete set for CS}
Let $G=\langle a\mid a^{k}=e\rangle$ be a  finite cyclic group and $S$ a Brandt semigroup with $|\Lambda|=|I|=n$. Then for ${1\leq p\leq k}$, $0\leq q \leq n-1$, the elements

\begin{align*}
e_{p+qk(n+1)}={\frac {1}{k}}{\sum_{j=1}^{k}{\omega^{-jp}{(q+1,a^j,q+1)}}},
\end{align*}
where $\omega=e^{\frac{2\pi i}{k}}$, $i=\sqrt{-1}$, form a complete set of primitive orthogonal idempotents of the Brandt semigroup algebra $\mathbb{C}S$.
\end{corollary}
\begin{proof}
This follows directly from Theorem \ref{complete set for any finite group}.
\end{proof}
\begin{remark}\label{primitive orthogonal idempotents are not central idempotents}
Let $S=B(G,n)$ be a finite Brandt semigroup. If $n=1$, the primitive orthogonal idempotents of $\mathbb{C}S$ are precisely primitive central idempotents.
If $n\geq 2$, in general, the primitive orthogonal idempotents are not primitive central idempotents. We can see the following example.

Let $G=\{e\}$, $I=\{1,2\}$. Then we have the Brandt semigroup
\begin{align*}
S=B_{2}=\langle a,b \mid a^2=b^2=0,aba=a,bab=b \rangle. \nonumber
\end{align*}
We also have the identity
\begin{align*}
\mathbf{1}=\sum_{i=1}^{2}(i,e,i). \nonumber
\end{align*}
By Theorem \ref{complete set for CS}, the elements
\begin{align*}
e_{1}=(1,e,1),\quad e_{4}=(2,e,2)\nonumber
\end{align*}
form a complete set of primitive orthogonal idempotents in the semigroup algebra $\mathbb{C}S$.

Let $\mathbb{C}S=\{\sum_{i=1}^{4}k_{i}s_{i}\mid k_{i}\in \mathbb{C},s_{i}\in S \}$. Then we have
\begin{align*}\nonumber
e_{1}\mathbb{C}S&=\big\{k_{1}(1,e,1)+k_{2}(1,e,2)\mid k_{1},k_{2} \in \mathbb{C}\big\},\\\nonumber
\mathbb{C}Se_{1}&=\big\{k_{1}(1,e,1)+k_{3}(2,e,1)\mid k_{1},k_{2} \in \mathbb{C}\big\}.
\end{align*}
Obviously,
\begin{align*}
e_{1}\mathbb{C}S\neq\mathbb{C}Se_{1}.\nonumber
\end{align*}
Then the primitive orthogonal idempotents are not primitive central idempotents in the semigroup algebra $\mathbb{C}S$.
\end{remark}

\section{Examples}\label{Example}
In this section, we give some examples of complete sets of primitive orthogonal idempotents of Brandt semigroup algebras.
\begin{example}\label{Example1}
Let $G_3=\langle a\mid a^3=e\rangle$ be a cyclic group and $\Lambda =I=\{2\}$. Then the elements
\begin{align*}
e_1 & =\frac{1}{3}\sum_{j=1}^{3}{\omega^{-3j}{(1,a^j,1)}}={\frac{1}{3}}(1,e,1)+{\frac{1}{3}}(1,a,1)+{\frac{1}{3}}(1,a^2,1),\\
e_2 & =\frac{1}{3}\sum_{j=1}^{3}{\omega^{-2j}{(1,a^j,1)}}={\frac{1}{3}}(1,e,1)- (\frac{1}{6} - \frac{\sqrt{3}\, \mathrm{i}}{6})(1,a,1) - (\frac{1}{6} + \frac{\sqrt{3}\, \mathrm{i}}{6})(1,a^2,1),\\
e_3 & =\frac{1}{3}\sum_{j=1}^{3}{\omega^{-j}{(1,a^j,1)}}={\frac{1}{3}}(1,e,1)- (\frac{1}{6} + \frac{\sqrt{3}\, \mathrm{i}}{6})(1,a,1) - (\frac{1}{6} - \frac{\sqrt{3}\, \mathrm{i}}{6})(1,a^2,1),\\
e_4 & =\frac{1}{3}\sum_{j=1}^{3}{\omega^{-3j}{(2,a^j,2)}}={\frac{1}{3}}(2,e,2)+{\frac{1}{3}}(2,a,2)+{\frac{1}{3}}(2,a^2,2),\\
e_5 & =\frac{1}{3}\sum_{j=1}^{3}{\omega^{-2j}{(2,a^j,2)}}={\frac{1}{3}}(2,e,2)- (\frac{1}{6} - \frac{\sqrt{3}\, \mathrm{i}}{6})(2,a,2) - (\frac{1}{6} + \frac{\sqrt{3}\, \mathrm{i}}{6})(2,a^2,2),\\
e_6 & =\frac{1}{3}\sum_{j=1}^{3}{\omega^{-j}{(2,a^j,2)}}={\frac{1}{3}}(2,e,2)- (\frac{1}{6} + \frac{\sqrt{3}\, \mathrm{i}}{6})(2,a,2) - (\frac{1}{6} - \frac{\sqrt{3}\, \mathrm{i}}{6})(2,a^2,2),
\end{align*}
where $w=e^{\frac{2\pi i}{3}}=\cos \frac{2\pi}{3}+i{\sin \frac{2\pi}{3}}$, $i=\sqrt{-1}$, form a complete set of primitive orthogonal idempotents of $\mathbb{C}B(G_3,1)$.
\end{example}
\begin{example}
Let $G=\langle a\rangle\times \langle b\rangle\cong C_{2}\times C_{2}$ be an abelian group of order $4$. The character table of $G$ is as follows.
\begin{table}[H]
\begin{tabular}{|c|c|c|c|c|}
  \hline
    $G$ & $e$ & $a$ & $b$ & $ab$ \\
  \hline
   $\chi_{1}$ & 1 & 1 & 1 & 1 \\
  \hline
  $\chi_{2}$ & 1 & -1 & 1 & -1 \\
  \hline
  $\chi_{3}$ & 1 & 1 & -1 & -1 \\
  \hline
  $\chi_{4}$ & 1 & -1 & -1 & 1 \\
\hline
\end{tabular}
\caption{Character table of abelian group $G$.}
\end{table}
Let $\Lambda=I=\{1\}$. Then the Brandt semigroup algebra $\mathbb{C}B(G,1)$ has four primitive orthogonal idempotents:
\begin{align*}\nonumber
e_{11}=\frac{1}{4}\big((1,e,1)+(1,a,1)+(1,b,1)+(1,ab,1)\big),\\ \nonumber
e_{21}=\frac{1}{4}\big((1,e,1)-(1,a,1)+(1,b,1)-(1,ab,1)\big),\\ \nonumber
e_{31}=\frac{1}{4}\big((1,e,1)+(1,a,1)-(1,b,1)-(1,ab,1)\big),\\ \nonumber
e_{41}=\frac{1}{4}\big((1,e,1)-(1,a,1)-(1,b,1)+(1,ab,1)\big).
\end{align*}
\end{example}

\begin{example}
Let $G=S_3$. This group has $6$ elements:
\begin{align*}\nonumber
1, \underbrace{(12), (13), (23)}, \underbrace{(123), (132)},
\end{align*}
where $\underbrace{}$ means the elements are conjugate. There are three conjugacy classes.

Next, we have the character table of $S_3$.
\begin{table}[H]
\begin{tabular}{|c|c|c|c|}
  \hline
    $S_{3}$ & 1 & (12) & (123) \\
  \hline
   $\chi_{1}$ & 1 & 1 & 1 \\
  \hline
  $\chi_{2}$ & 1 & -1 & 1  \\
  \hline
  $\chi_{3}$ & 2 & 0 & -1  \\
  \hline
\end{tabular}
\caption{Character table of $S_3$.}
\end{table}
Let $\Lambda=I=\{1,2\}$. Then the Brandt semigroup algebra $\mathbb{C}B(S_3,2)$ has six primitive orthogonal idempotents:
\begin{align*}\nonumber
e_{11}&=\frac{1}{6}\big((1,1,1)+(1,(123),1)+(1,(132),1)+(1,(12),1)+(1,(13),1)+(1,(23),1)\big),\\ \nonumber
e_{21}&=\frac{1}{6}\big((1,1,1)+(1,(123),1)+(1,(132),1)-(1,(12),1)-(1,(13),1)-(1,(23),1)\big), \\ \nonumber
e_{31}&=\frac{1}{6}\big(2^2(1,1,1)-2(1,(123),1)-2(1,(132),1)\big), \\ \nonumber
e_{12}&=\frac{1}{6}\big((2,1,2)+(2,(123),2)+(2,(132),2)+(2,(12),2)+(2,(13),2)+(2,(23),2)\big),\\ \nonumber
e_{22}&=\frac{1}{6}\big((2,1,2)+(2,(123),2)+(2,(132),2)-(2,(12),2)-(2,(13),2)-(2,(23),2)\big), \\ \nonumber
e_{32}&=\frac{1}{6}\big(2^2(2,1,2)-2(2,(123),2)-2(2,(132),2)\big).  \nonumber
\end{align*}
These primitive orthogonal idempotents are obtained from Theorem \ref{complete set for any finite group}. For example,
\begin{align*}\nonumber
e_{31}&=\frac{\chi_{3}(1)}{6}\big(\chi_{3}(1)^{-1}(1,1,1)+\chi_{3}(123)^{-1}(1,(123),1)
+\chi_{3}(132)^{-1}(1,(132),1)\big) \\ \nonumber
&=\frac{1}{6}\big(2^2(1,1,1)-2(1,(123),1)-2(1,(132),1)\big). \\ \nonumber
\end{align*}

\end{example}

\medskip

\noindent {\bf Acknowledgments}: This work was supported by the National Natural Science Foundation
of China (Grant No.\@ 11771191, 11501267, 11401275).
\medskip
\medskip

\end{document}